\newtheorem{theorem}{Theorem}[section]
\newtheorem{example}[theorem]{Example}
\newtheorem{remark}[theorem]{Remark}
\newtheorem{lemma}[theorem]{Lemma}
\newtheorem{proposition}[theorem]{Proposition}
\numberwithin{equation}{section}
\numberwithin{equation}{section}
\newcommand{\R}{{\mathbb{R}}}
\newcommand{\C}{{\mathbb{C}}}
\begin{document}

\title[A modification of the Hodge star operator on manifolds with boundary]{A modification of the Hodge star operator on manifolds with boundary}

\author[Ryszard Rubinsztein ]{Ryszard L. Rubinsztein\\ 
}
\address {Department of Mathematics, Uppsala University, Box 480,
Se-751 06 Uppsala, Sweden;}
\email{ryszard{\@@}math.uu.se}

\begin{abstract}
If $\, M\,$ is a smooth compact oriented Riemannian manifold of dimension $\, n=4k+2\,$, with or without boundary, and $\, F\,$ is a vector bundle on $\, M\,$ with an inner product and a flat connection, we construct a modification of the Hodge star operator on the parabolic cohomology $\, H^{2k+1}_{par}(M;F)\,$. This operator gives a canonical complex structure on  $\, H^{2k+1}_{par}(M;F)\,$ compatible with the symplectic form $\, \omega\,$ given by the wedge product of forms in the middle dimension. In case when $\, k=0\,$ that gives a canonical almost complex structure on the non-singular part of the moduli space of flat connections on a Riemann surface with or without boundary and monodromies along boundary components belonging to fixed conjugacy classes. The almost complex structure is compatible with the standard symplectic form $\, \omega\,$ on the moduli space.
\end{abstract}

\subjclass[2000]{Primary: 58A14, 57R17 ; \,  Secondary: 57R19 }

\keywords{Modified Hodge star operator, parabolic cohomology, almost complex structure, manifolds with boundary.}

\maketitle

\section{Introduction}\label{intro}

\bigskip

Let $\, M\,$ be a smooth compact oriented Riemannian manifold of dimension $\,n\,$, with or without boundary. Let $\, F\,$ be a smooth real vector bundle over $\, M\,$, of finite fiber dimension, equiped with a positive definite inner product $\, B\,$ and a flat connection. We denote by $\, H^*(M;F)\,$ the (deRham) cohomology of $\, M\,$ with coefficients in the local system given by $\, F\,$. 

Let $\, *:H^*(M;F) \rightarrow H^*(M;F)\,$ be the Hodge star operator given by the orientation and the Riemannian metric on $\, M\,$ (see Section \ref{hodge}).
  
For $\, n=2m\,$ the wedge product of forms and the inner product $\, B\,$ define a bilinear form $\, \omega : H^m(M;F) \otimes  H^m(M;F) \rightarrow \R\,$. If $\, n=4k+2\,$, the form $\, \omega\,$ is skew-symmetric. 

If $\, M\,$ has no boundary (and  $\, n=4k+2$), the form $\, \omega\,$ is non-degenerate and gives a symplectic structure on the vector space $\, H^{2k+1}(M;F)\,$. It is well-known that in that case the Hodge star operator $\, *\,$ gives a complex structure on  $\, H^{2k+1}(M;F)\,$ compatible with the symplectic form $\, \omega \,$. 

In the general case, when $\,M\,$ may have a non-empty boundary, we 
replace $\, H^*(M;F)\,$ by the {\it parabolic cohomology} $\, H_{par}^*(M;F)\,$  of $\, M\,$ with coefficients in the local system given by $\, F\,$ (see Section \ref{hodge}). Thus $\, H_{par}^*(M;F)\,$ is the kernel of the homomorphism of restriction to the boundary, 
\[ H_{par}^*(M;F)= \text{Ker}(r: H^*(M;F) \rightarrow H^*(\partial M;F)\, ) \,\, .  \] 
If $\, n=4k+2\,$, the restriction of the skew-symmetric form $\, \omega\,$ to the parabolic cohomology $\, H^{2k+1}_{par}(M;F)\,$ is again non-degenerate and equips it with a structure of a symplectic vector space.

It is the aim of this note to point out that, if the boundary of $\, M\,$ is possibly non-empty and $\, n=4k+2\,$, then there is a canonical modification of the Hodge star operator which gives an operator on parabolic cohomology, denoted here by $\, J_{par}\,$, 
\[J_{par}: H^{2k+1}_{par}(M;F) \rightarrow  H^{2k+1}_{par}(M;F) \,\, .    \]
The operator  $\, J_{par}\,$ satisfies  $\, J_{par}^2=-Id\,$ and  gives a complex structure on the vector space $\, H^{2k+1}_{par}(M;F)\,$  compatible with the symplectic form $\, \omega\,$ on it. When the boundary of $\, M\,$ is empty  then  $\, H_{par}^*(M;F)=  H^*(M;F)  \,$ and  $\, J_{par}\,$ is equal to the ordinary Hodge star operator.

If $\,n=2\,$ i.e. if $\, M\,$ is a compact oriented surface one can consider the moduli space $\, {\mathscr M}\,$ of flat connections on the trivial principal bundle $\, M\times G\,$, $\, G\,$ being a compact Lie group with a Lie algebra $\, {\mathfrak g}\,$. The flat connections have monodromies along boundary components restricted to fixed conjugacy classes in $\, G\,$. The moduli space $\, {\mathscr M}\,$  is a manifold with singularities. Away from the singular points, the tangent spaces to  $\, {\mathscr M}\,$ can be identified with 
the  parabolic cohomology $\, H^1_{par}(M; {\mathfrak g}_{\phi}   )\,$, where $\,  {\mathfrak g}_{\phi} \,$ is the trivial vector bundle over $\, M\,$ with fiber $\,  {\mathfrak g}\,$ and connection $\, \phi\,$. 
Let $\, \Sigma \subset {\mathscr M}\,$ denote the singular locus. The symplectic form $\, \omega\,$  is closed as a $2$-form on  $\, {\mathscr M}-\Sigma\,$ and turns it into a symplectic manifold \cite{GHJW}.  

Given a Riemannian metric on  $\, M$, the modified Hodge star operator  $\, J_{par}\,$ on  $\, H^1_{par}(M; {\mathfrak g}_{\phi} )\,$ constructed in Section \ref{hodge} gives a canonical almost complex structure on the non-sngular part of the moduli space $\, {\mathscr M}-\Sigma\,$ compatible with the symplectic form $\, \omega\,$. That applies both to the case when $\, M\,$ is with or without boundary.


\bigskip

\section{A linear problem}\label{linprob}

\bigskip  

Let $\, V\,$ be a finite dimensional vector space over the field of complex numbers $\, \C\,$, equiped with a real valued positive definite inner product $\, (\,\,\, , \,\,)\,$ such that the operator of multiplication by the complex number $\, i=\sqrt{-1}\,$ is an isometry. We denote this operator by $\, J\,$. (In other words, $\, (\,\,\, , \,\,)\,$ is the real part of a hermitian inner product on  $\, V$.)  

Let $\, U\,$ be a real subspace of $\, V\,$  satisfying 
\begin{equation}\label{linprob1}  
J(U) \cap U^{\perp} = \{ 0\}.
\end{equation}
Here $\, U^{\perp}\,$ denotes the orthogonal complement of $\, U\,$ in $\, V\,$ with respect to the inner product  $\, (\,\,\, , \,\,)\,$.  The condition  (\ref{linprob1})    is equivalent to the requirement that the alternating $2$-form $\, \omega (u , v)=(Ju , v)\,$ is non-degenerate on $\, U\,$ and, hence, equips  $\, U\,$ with a structure of a symplectic space.

The aim of this Section is to make the observation that the complex structure of $\, V\,$ induces a specific complex structure on every real subspace $\, U\,$ satisfying (\ref{linprob1}). This complex structure will be compatible with the symplectic $2$-form  $\, \omega (u , v)=(Ju , v)\,$ on $\, U\,$. 

Let $\, U\,$ be a real subspace of $\, V\,$. We denote by $\, p_U:V\rightarrow U\,$  the orthogonal projection of $\, V\,$ on  $\, U\,$ and define $\, G:U\rightarrow U\,$ by $\,G(u)=p_U(J(u))\,$ for $\,u\in U\,$.  

\begin{lemma}\label{linprobL1} 
(i) For every real subspace $\, U\,$ of $\, V\,$ the real linear operator $\, G:U\rightarrow U\,$ is skew-symmetric   with respect to the inner product  $\, (\,\,\, , \,\,)\,$.

(ii)  If $\, U\,$ satisfies the condition  (\ref{linprob1}) then $\,G\,$ is invertible and the symmetric operator  $\, G^2=G\circ G:U\rightarrow U\,$ is negative definite.
\end{lemma}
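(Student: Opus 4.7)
The plan is to reduce everything to two standard facts: that $J$ is skew-adjoint with respect to $(\,\cdot\,,\,\cdot\,)$, and that the orthogonal projection $p_U$ is self-adjoint with image in $U$.

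For part (i), I would first verify that $J$ itself is skew-adjoint on $V$. Since $J$ is an isometry, $(Jx,Jy)=(x,y)$ for all $x,y\in V$; replacing $y$ by $Jy$ and using $J^{2}=-\operatorname{Id}$ gives $(Jx,y)=-(x,Jy)$. Now for $u,v\in U$, the projection can be dropped in the inner product, because $Ju-p_U(Ju)\in U^{\perp}$ is orthogonal to $v\in U$. Hence
\[ (G(u),v)=(p_U(Ju),v)=(Ju,v)=-(u,Jv)=-(u,p_U(Jv))=-(u,G(v)), \]
proving skew-symmetry of $G$.

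For part (ii), I would first establish injectivity of $G$ using the hypothesis (\ref{linprob1}). If $G(u)=0$ for some $u\in U$, then $p_U(Ju)=0$, which means $Ju\in U^{\perp}$; combined with $Ju\in J(U)$, condition (\ref{linprob1}) forces $Ju=0$, hence $u=0$ (since $J$ is invertible). As $U$ is finite dimensional, $G$ is invertible. Next, I would use the general fact that $G^{2}$ is automatically symmetric and negative semidefinite once $G$ is skew-symmetric: from part (i),
\[ (G^{2}u,u)=(G(Gu),u)=-(Gu,Gu)\le 0, \]
and symmetry follows similarly. Finally, equality $(G^{2}u,u)=0$ would force $Gu=0$ and then $u=0$ by the injectivity just proved, so $G^{2}$ is negative definite.

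I do not anticipate a real obstacle here; the statement is a direct unpacking of definitions. The only minor subtlety is recognizing that the condition $J(U)\cap U^{\perp}=\{0\}$ is exactly what is needed to upgrade the generic skew-symmetric $G$ from non-degenerate to invertible, and that this in turn is exactly what upgrades $G^{2}$ from negative semidefinite to negative definite.
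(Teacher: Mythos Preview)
Your proposal is correct and follows essentially the same route as the paper: skew-symmetry of $G$ is obtained by combining the skew-adjointness of $J$ with the fact that the projection $p_U$ can be dropped against vectors in $U$, and invertibility plus negative definiteness of $G^2$ come from exactly the computation $(G^2u,u)=-(Gu,Gu)$ together with the observation that condition~(\ref{linprob1}) is precisely $\operatorname{Ker}(p_U)\cap J(U)=\{0\}$. The only cosmetic differences are that you spell out why $J$ is skew-adjoint and you argue injectivity of $G$ elementwise, whereas the paper phrases it as $\operatorname{Ker}(p_U)$ meeting $J(U)$ trivially.
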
 

\begin{proof} (i) Let $\, u, v\in U\,$. Since $\, p_U\,$ is symmetric, while $\, J\,$ is skew-symmetric w.r.t.  $\, (\,\,\, , \,\,)\,$ on $\, V\,$, it follows that 
\[\begin{split}
(G(u), \, v)&= (p_UJ(u), \, v)=(J(u), \, p_U(v))=(J(u), \, v)=\\
&=-(u, \, J(v))=-(p_U(u), \, J(v))=-(u, \, p_UJ(v))=\\
&=-(u, \, G(v)).
\end{split} 
\] 
Thus $\,G:U\rightarrow U\,$ is skew-symmetric. 

(ii) If $\, U\,$ satisfies the condition (\ref{linprob1}) then $\, \text{Ker}(p_U)\,$ intersects the image of $\, J|_U\,$ trivially and $\, G\,$ is injective and, hence invertible. For $\, u\in U, \,u\ne 0\,$ we have
\[ 
(G^2(u), \, u)=-(G(u),\, G(u))< 0\,\, 
\]
and $\, G^2\,$ is negative definite.
\end{proof}

Let $\, U\,$ satisfy the condition (\ref{linprob1}) and let $\, R:U\rightarrow U\,$ be the positive square root of the positive definite symmetric operator $\, -G^2:U\rightarrow U, \,\, R=(-G^2)^{\frac 12}\,$. The operator $\, G\,$ commutes with $\, -G^2\,$ and maps its eigenspaces to themselves. It follows that $\, G\,$ commutes with $\, R\,$. We define the operator $\, J_U:U\rightarrow U\,$ by $\, J_U=R^{-1}G\,$. 

Let $\,  \omega (u,v)=(Ju, \, v)\,$ for $\, u,v\in U\,$.

\begin{proposition}\label{linprobP1}
If $\, U\,$ is a real subspace of $\, V\,$ satisfying the condition  (\ref{linprob1}) then the operator  $\, J_U:U\rightarrow U\,$ satisfies 

(i) $\, ( J_U)^2=-Id\,$, 

(ii)  $\, (J_U(u), \, J_U(v)) =  (u, \, v) \,\, \text{for} \,\, u,v\in U\,$,

(iii) $\, \omega (J_U(u), \, J_U(v)) =  \omega (u, \, v) \,\, \text{for} \,\, u,v\in U\,$, and 

(iv) $\, \omega (u, \, J_U(u))>0 \,$ for all $\, u\in U, \, u\ne 0\,$, 

\noindent
that is,  $\, J_U\,$ is a complex structure and an isometry on $\, U\,$, and it is compatible with the symplectic form $\, \omega\,$. 
\end{proposition}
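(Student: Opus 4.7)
The plan is to reduce the four properties to formal manipulations resting on three observations. The first is that on $U$ the symplectic form admits the simple description $\omega(u,v)=(p_{U}Ju,v)=(Gu,v)$, obtained by inserting $p_{U}$ into $(Ju,v)$ using $v\in U$ and the symmetry of $p_{U}$. The second is that $G$ commutes with $R$, and hence with $R^{-1}$, as already noted before the proposition. The third is the defining relation $R^{2}=-G^{2}$, together with the fact that $G$ is skew-symmetric and $R$ is symmetric by Lemma \ref{linprobL1}.

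I would first dispatch (i) directly: $J_{U}^{2}=R^{-1}GR^{-1}G$ collapses to $R^{-2}G^{2}=-\mathrm{Id}$ after commuting $G$ past $R^{-1}$. The heart of the argument is then the identity $GJ_{U}=R^{-1}G^{2}=-R$, which I would record next; it is the algebraic reason behind the compatibility statements. With this in hand, (iii) reduces to writing $\omega(J_{U}u,J_{U}v)=(GJ_{U}u,J_{U}v)=(-Ru,R^{-1}Gv)$ and shuffling $R$ (symmetric) and $G$ (skew-symmetric) across the inner product to recover $\omega(u,v)=(Gu,v)$. For (iv) the same identity gives $\omega(u,J_{U}u)=(Gu,R^{-1}Gu)=(R^{-1}Gu,Gu)$, which is strictly positive for $u\neq 0$ because $R^{-1}$ is positive definite and $G$ is injective by Lemma \ref{linprobL1}(ii). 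Finally (ii) would follow from the same bookkeeping applied to $(R^{-1}Gu,R^{-1}Gv)$, using symmetry of $R^{-1}$, the commutation of $G$ with $R^{-2}$, the skew-symmetry of $G$, and the identity $G^{2}R^{-2}=-\mathrm{Id}$.

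I do not expect a serious obstacle. Once one recognizes that $\omega|_{U}=(G\cdot,\cdot)$ and that $J_{U}=R^{-1}G$ is engineered so that the single relation $R^{2}=-G^{2}$ doubles as $J_{U}^{2}=-\mathrm{Id}$ and $GJ_{U}=-R$, each of (i)--(iv) becomes a one-line computation. The only point demanding a bit of care is keeping track of which operators commute (essentially only $G$, $R$, and their integer powers among themselves) and which transfers across the inner product invoke symmetry versus skew-symmetry; I would justify each such move explicitly, but there is no deeper difficulty to overcome.
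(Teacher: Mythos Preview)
Your proposal is correct and follows essentially the same approach as the paper: both rely on the commutation of $G$ and $R$, the skew-symmetry of $G$, the symmetry and positivity of $R$, and the defining relation $R^{2}=-G^{2}$. Your explicit use of $\omega|_{U}=(G\cdot,\cdot)$ and the identity $GJ_{U}=-R$ streamlines part (iii) slightly compared to the paper (which instead uses $GJ_{U}=J_{U}G$ together with part (ii)), but the underlying computations are the same.
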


\begin{proof}
(i)  $\, ( J_U)^2=R^{-1}GR^{-1}G=R^{-2}G^2=(-G^2)^{-1}G^2= -Id\,$. 

\medskip

(ii) Since $\, R\,$ is symmetric, $\, G\,$ is skew-symmetric and $\, R\,$ and  $\, G\,$  commute,  we have for $\, u,v\in U\,$ 
\[ \begin{split}
(J_U(u), \, J_U(v))&= (R^{-1}G(u), \, R^{-1}G(v))=( G(u), \, R^{-2}G(v))=\\
&= ( u, \, -GR^{-2}G(v))=( u, \, R^{-2}(-G^2)(v))=\\
&=(u, \, v).  
\end{split}\]

(iii)  Furthermore, we have $\, GJ_U=GR^{-1}G=J_UG \,$ and $\, J_U(v)=p_UJ_U(v)\,$ since $\, J_U(v)\in U\,$. Therefore
\[\begin{split}   \omega (J_U(u), \, J_U(v))&=  (JJ_U(u), \,  J_U(v)) = 
(JJ_U(u), \, p_UJ_U(v)) =\\
&= (p_UJJ_U(u), \, J_U(v))=(GJ_U(u), \, J_U(v))=\\
&=(J_UG(u), \, J_U(v))=(G(u), \, v)= (p_UJ(u), \, v)=\\
&=(J(u), \, v)=\\
&=\omega (u,v). 
\end{split}\]

(iv) Finally, if $\, u\in U, \, u\ne 0\,$ then  
\[\begin{split}
 \omega (u, \, J_U(u))&=(J(u),\,  J_U(u))= (p_UJ(u), \, J_U(u))=(G(u), \, J_U(u))=\\
&=(u, \, -GJ_U(u))= (u, \, -GR^{-1}G(u))=\\
&=(u, \, R^{-1}(-G^2)(u))=(u,\, R^{-1}R^2(u))=(u,\, R(u))>0  
\end{split}  \]
since $\, R\,$ is a positive definite symmetric operator on $\, U\,$.
\end{proof}

\begin{example}\label{linprobEx1} {\rm Let $\, V=\C ^2\,$ equiped with the standard inner product on $\,\C ^2\,$ identified with $\, \R ^4\,$. Choose a real number $\, r\in \R\,$. Let $\, u_1=(1, 0), \, u_2(r)=(i,r)\in V\,$ and $\, U_r=\text{span}_{\R} \{ u_1, \, u_2(r) \}\,$. Thus  $n=\dim_{\R} U_r  =2$.   Identifying $\, \C ^2\,$ with  $\, \R ^4\,$ via $\, \C ^2 \ni (z_1, z_2)\leftrightarrow (\text{Re}(z_1), \text{Im}(z_1), \text{Re}(z_2), \text{Im}(z_2)) \in \R ^4\,$ we get $\, U_r=\{ (a, b, br, 0)\, | \, a,b \in \R \}, \,\, J(U_r)=\{ (-b, a, 0, br) \, |\,\, a,b \in \R \}\,$ and $\, U_r^{\perp}= \{ (0, -cr, c, d) \,| \, c,d \in \R \}\,$. It follows that for every $\, r\in \R\,$, the real subspace $\, U_r\,$ satisfies  the condition  (\ref{linprob1}):  $\, J(U_r)\cap U_r^{\perp}=\{ 0\}\,$. If $\, r\ne 0\,$, then $\,U_r\,$ satisfies an additional property 
\begin{equation}\label{linprob2} 
 J(U_r)\cap U_r=\{ 0\},
\end{equation} 
that is, $\, U_r\,$ is a {\it totally real subspace} of $\, V\,$.
Taking direct sums of pairs $\, (V, \, U_r)\,$ one gets examples of subspaces $\, U\,$ satisfying the condition  (\ref{linprob1}) in every even  dimension $n$. 
The skew-symmetric operator $\, G:U_r \rightarrow  U_r\,$ is given by $\, G(u_1)=\frac 1{1+r^2}\,\, u_2(r)\,$ and $\, G(u_2(r))=-u_1\,$. Hence, $\, G^2=-\frac 1{1+r^2} \,\, Id_{U_r}\,$, $\, R= \frac 1{\sqrt{1+r^2}} \,\, Id_{U_r}\,$, and the complex structure $\, J_{U_r}:U_r\rightarrow U_r\,$ is given by $\, J_{U_r}(u_1)=\frac 1{\sqrt{1+r^2}}\,\, u_2(r)\,$ and  $\, J_{U_r}(u_2(r))=-\sqrt{1+r^2}\,\,  u_1\,$.

Real subspaces $\, U\,$ satisfying both properties   (\ref{linprob1}) and  (\ref{linprob2}) are typical of the geometric context in which the observations of the present Section will be applied. }
\end{example}

\bigskip

\section{Hodge theory on manifolds with boundary. \\ Modified Hodge star operator on parabolic cohomolgy }\label{hodge} 

\bigskip 

The main aim of this Section is to define a modified Hodge star operator on the parabolic cohomology (a definition of parabolic cohomolgy is recalled below). 

Let $\, M\,$ be a smooth compact oriented Riemannian manifold of dimension $\, n\,$, with or without boundary. Let $\, F\,$ be a smooth real vector bundle over $\, M$, of finite fiber dimension, equiped with a positive definite inner product $\, B( \,\, , \,\, )\,$ and a flat connection $\, A\,$.  Let $\, d_A:\Omega ^0(F)\rightarrow \Omega ^1(F) \,$ be the operator of the covariant derivative given by  $\,A\,$. Here we use $\,\Omega ^p(F)\,$ to denote smooth sections of $\, \Lambda ^pT^*M \otimes F\,$, the $p$-forms with values in $\, F\,$. We use the same symbol $\, d_A\,$ to denote the unique extension $\, d_A:\Omega ^p(F)\rightarrow \Omega ^{p+1}(F)\,$ satisfying the Leibnitz rule. Since $\, A\,$ is a flat connection, we have $\, d_Ad_A=0\,$ and get a cochain complex 
\begin{equation}\label{hodge4}
0\longrightarrow \Omega ^0(F)\xrightarrow{\, \, d_A\,\, }  \Omega ^1(F)\xrightarrow{\,\, d_A\,\, } ... \xrightarrow{\,\, d_A\,\, } \Omega ^p(F)\xrightarrow{\,\, d_A\,\, } \Omega ^{p+1}(F)\longrightarrow ...  
\end{equation} 

The Riemannian metric and the orientation on $\, M\,$ and the inner product  $\, B\,$ on $\, F\,$ give rise to an $L^2$ inner product $\, ( \,\, , \,\, )\,$ on $\,  \Omega ^*(F)\,$ satisfying 
\[ ( \alpha , \beta) = \int_M \, B(\alpha \wedge *\beta ) \,\, , \] 
where $\,*\,$ denotes the Hodge star operator. (The Hodge star operator $\, *\,$ on  $\, \Lambda ^*T^*M \otimes F\,$ is defined as the tensor product of the usual Hodge star operator on  $\, \Lambda ^*T^*M \,$ with the identity on $\, F\,$.) We have also the co-differential 
\[
\delta _A = (-1)^{n(p+1)+1} *(d_A)*: \Omega ^p(F)\rightarrow  \Omega ^{p-1}(F)\,\, ,
\]
which on closed manifolds is the $L^2$-adjoint of the operator $d_A$.

From now on the operators $\, d_A\,$ and $\,\delta _A\,$ will be denoted by $\, d\,$ and $\, \delta\,$ respectively.  

For the Hodge Decomposition Theorem on manifolds with boundary we refer to  \cite{M1} and \cite{CDGM}. The reference  \cite{CDGM} contains also short remarks on the history of the subject.  
Below we mostly quote from  \cite{CDGM}.

 A form $\, \omega\in \Omega ^p(F) \,$ is called {\it closed } if it satisfies $\, d\omega = 0\,$ and {\it co-closed} if it satisfies $\, \delta \omega = 0\,$. We denote by $\, C^p\,$  and  $\, cC^p\,$  the spaces of closed respectively co-closed $p$-forms.  We define $\, E^p=d( \Omega ^{p-1}(F)) \,$ and  $\,c E^p=\delta ( \Omega ^{p+1}(F)) \,$.

Along the boundary $\, \partial M\,$ every $p$-form $\, \omega \in  \Omega ^p(F)\,$ can be decomposed into tangential and normal components (depending on the Riemannian metric on $\, M$). For $\, x\in  \partial M\,$, one has 
\begin{equation}\label{hodge1}
\omega (x) = \omega_{tan} (x)  + \omega_{norm} (x) \,\, ,  
\end{equation} 
where  $\,  \omega_{norm} (x)\,$ belongs to the kernel of the restriction homomorphism 
\[r^*:\Lambda ^*T ^*_xM \otimes F_x \rightarrow \Lambda ^*T ^*_x( \partial M) \otimes F_x \,\, ,  \]
while  $\,  \omega_{tan} (x)\,$ belongs to the orthogonal complement of that kernel, 
\[ \omega_{tan} (x) \in (Ker(r ^*))^{\perp} \subset \Lambda ^*T ^*_xM \otimes F_x \,\, .  \]
Note that $\, r ^*\,$ maps the orthogonal complement $\, (Ker(r ^*))^{\perp}\,$ of the kernel isomorphically onto $\, \Lambda ^*T ^*_x( \partial M) \otimes F_x\,$.

Following \cite{CDGM}, we define $\, \Omega ^p_N\,$ to be the space of smooth $p$-forms from  $\,  \Omega ^p(F)\,$ satisfying {\it Neumann boundary conditions} at every point of  $\,  \partial M\,$,
\[\Omega ^p_N = \{ \omega \in  \Omega ^p(F)\, | \,  \omega_{norm}=0  \}\, ,  \]
and $\, \Omega ^p_D\,$ to be  the space of smooth $p$-forms from  $\,  \Omega ^p(F)\,$ satisfying {\it Dirichlet boundary conditions} at every point of  $\,  \partial M\,$,
\[\Omega ^p_D = \{ \omega \in  \Omega ^p(F)\, | \,  \omega_{tan}=0  \}\, .  \]
Furthermore, we define $\, cE^p_N=\delta (\Omega ^{p+1}_N)\,$ and  $\, E^p_D= d(\Omega ^{p-1}_D)\,$  and denote 
\[ CcC^p = C^p \,\cap \, cC^p= \{ \omega \in  \Omega ^p(F) \,|\,\, d\omega = 0, \,\, \delta \omega = 0\,\,\},\]
\[ CcC^p_N = \{ \omega \in  \Omega ^p(F) \,|\,\, d\omega = 0, \,\, \delta \omega = 0, \,\, \omega_{norm}=0\,\,\},\]
\[ CcC^p_D = \{ \omega \in  \Omega ^p(F) \,|\,\, d\omega = 0, \,\, \delta \omega = 0, \,\, \omega_{tan}=0\,\,\}.\]

If the boundary $\, \partial M=\emptyset \,$ is empty then every form $\, \omega\,$ satisfies $\, \omega_{norm}= \omega_{tan}=0 \,$, the space $\, CcC^p = CcC^p_N= CcC^p_D \,$ consists of all forms which are both closed and co-closed and this space is equal to the space of harmonic $p$-forms, that is, to the kernel of the Laplacian $\, \Delta = \delta d + d \delta \,$ acting on $\, \Omega ^p(F)\,$. 

If, on the other hand, the boundary  $\, \partial M \ne \emptyset\,$ is non-empty and $\, M\,$ is connected then the intersection $\, CcC^p_N\, \cap\,  CcC^p_D = 0\,$ (\cite{CDGM} Lemma 2) and the kernel of the Laplacian $\, \Delta\,$ contains all forms which are both closed and co-closed but can be strictly larger than the space of such forms, (\cite{CDGM} Example).  
 
In the following the symbol $\, \oplus \,$ will denote an orthogonal direct sum.

\begin{theorem}\label{hodgeT1} {\rm ({\bf Hodge Decomposition Theorem})} Let $\, M\,$ be a compact, connected, oriented, smooth Riemannian $n$-manifold, with or without boundary and let $\, F\,$ be a smooth real vector bundle over $\, M\,$,  of finite fiber dimension, equiped with an inner product and a flat connection $\, A\,$. Then the space  $\, \Omega ^p(F)\,$  of $\, F$-valued smooth $p$-forms decomposes into the orthogonal direct sum 
\begin{equation}\label{hodge2}
 \Omega ^p(F) = cE^p_N \oplus CcC^p \oplus E^p_D \,\, .
\end{equation} 
Furthermore, we have the orthogonal direct sum decompositions
\begin{equation}\label{hodge3}
 CcC^p =  CcC^p_N \oplus ( E^p \, \cap \, cC^p) = (C^p\, \cap \, cE^p) \oplus CcC^p_D \,\, .
\end{equation}
\end{theorem}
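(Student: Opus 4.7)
My plan is to separate the theorem into two parts: the orthogonality of the stated summands, which follows by direct manipulation with Green's identity, and the completeness of the decompositions, which is the substantive analytic content and requires elliptic theory for the Hodge Laplacian with mixed boundary data. The basic tool is the Green's identity
\[ (d\alpha,\beta) - (\alpha,\delta\beta) = \int_{\partial M} B(\alpha\wedge *\beta)|_{\partial M}, \]
which, in combination with the definitions of Neumann and Dirichlet conditions, gives $(d\alpha,\beta)=(\alpha,\delta\beta)$ whenever $\alpha_{tan}=0$ or $\beta_{norm}=0$.

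With this identity in hand, every orthogonality assertion in (\ref{hodge2}) and (\ref{hodge3}) reduces to a single line. For $cE^p_N\perp E^p_D$: if $\alpha\in\Omega^{p-1}_D$ and $\gamma\in\Omega^{p+1}_N$, then $(d\alpha,\delta\gamma)=(\alpha,\delta^2\gamma)=0$, using $\alpha_{tan}=0$ to kill the boundary term. For $CcC^p\perp cE^p_N$: $(\omega,\delta\gamma)=(d\omega,\gamma)=0$ when $d\omega=0$ and $\gamma_{norm}=0$; the orthogonality $CcC^p\perp E^p_D$ is symmetric. For (\ref{hodge3}), the orthogonality of $CcC^p_N$ to $E^p\cap cC^p$ is $(h,d\alpha)=(\delta h,\alpha)=0$, since $\delta h=0$ and $h_{norm}=0$; a symmetric computation handles the other equality.

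For existence, I would invoke the elliptic theory of the Hodge Laplacian $\Delta_A=d\delta+\delta d$ on $M$ with mixed (absolute/relative) boundary conditions, as developed in \cite{M1} and used in \cite{CDGM}. The core statement is that $\Delta_A$ with such boundary data is a Fredholm operator whose kernel is precisely $CcC^p_N$ (for Neumann) or $CcC^p_D$ (for Dirichlet). Given $\omega\in\Omega^p(F)$, one solves $\Delta_A\eta=\omega-P\omega$, where $P$ denotes the $L^2$-projection onto $CcC^p$, choosing boundary data on $\eta$ so that $d\eta$ satisfies Neumann and $\delta\eta$ satisfies Dirichlet conditions. The identity $\omega=\delta(d\eta)+d(\delta\eta)+P\omega$ then realises (\ref{hodge2}). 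The refined splitting (\ref{hodge3}) arises by applying the analogous argument inside $CcC^p$, which amounts to Friedrichs' theorem identifying the orthogonal complement of $CcC^p_N$ (respectively $CcC^p_D$) in $CcC^p$ with $E^p\cap cC^p$ (respectively $C^p\cap cE^p$).

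The principal obstacle is purely analytic: establishing the Fredholm property of $\Delta_A$ with these mixed boundary conditions and identifying its kernel with the spaces of harmonic fields satisfying the prescribed boundary conditions. This rests on Gaffney-type inequalities, Rellich compactness, and a careful treatment of boundary traces, all of which are carried out in \cite{M1} and quoted in the form needed here in \cite{CDGM}. Once this machinery is in place, both parts of the theorem follow from the resulting $L^2$-orthogonal decompositions together with the orthogonality calculations above.
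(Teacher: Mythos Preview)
The paper does not actually prove this theorem: immediately after the statement it writes ``For the proof of Theorem~\ref{hodgeT1} see \cite{M1},'' deferring entirely to Morrey (with \cite{CDGM} as the expository source it has been quoting). Your sketch of Green's identity for the orthogonality and elliptic theory for completeness is the standard outline of that argument, and since you likewise hand off the analytic core to \cite{M1} and \cite{CDGM}, your proposal is consistent with---indeed more detailed than---what the paper itself provides.
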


For the proof of Theorem \ref{hodgeT1} see \cite{M1}.  

\medskip 

We denote by $\, H^*(M; F)\,$ the cohomology of the complex (\ref{hodge4}) and define \break  $\, H^*(\partial M; F|_{\partial M})\,$ and  $\, H^*(M, \partial M; F)\,$ accordingly. 

It follows from (\ref{hodge2}) that the space $\, C^p\,$ of closed $p$-forms decomposes as $\, C^p=CcC^p \oplus E^p_D\,$. Hence, from  (\ref{hodge3}), we get  $\, C^p=CcC^p \oplus E^p_D = CcC^p_N \oplus ( E^p \, \cap \, cC^p) \oplus  E^p_D \,$. Using  (\ref{hodge3}) once again we see that $\, ( E^p \, \cap \, cC^p) \oplus  E^p_D = E^p \,$. Therefore
\begin{equation}\label{hodge5} 
 C^p=CcC^p \oplus E^p_D = CcC^p_N \oplus ( E^p \, \cap \, cC^p) \oplus  E^p_D = CcC^p_N \oplus E^p \, .  
\end{equation} 
Thus, $\, CcC^p_N\,$ is the orthogonal complement of the exact $p$-forms within the closed ones, so $\, CcC^p_N \cong  H^p(M; F)\,$. 
In a similar way, the space $\, cC^p\,$ of co-closed $p$-forms decomposes as 
\begin{equation}\label{hodge6} 
cC^p=cE^p_N \oplus CcC^p=cE^p_N\oplus (C^p\cap cE^p) \oplus CcC^p_D = cE^p \oplus CcC^p_D \, .
\end{equation}   
It follows again from (\ref{hodge2}) and  (\ref{hodge3}) that  $\, CcC^p_D \cong H^p(M,\partial M; F)\,$.  

So far the reference \cite{CDGM}.

Let now  $\, r^*:  H^*(M; F) \rightarrow  H^*(\partial M; F|_{\partial M})\,$  be the homomorphism of the restriction to the boundary.

We define {\it the parabolic cohomology  $\,H^*_{par}(M; F)\,$   of the manifold $\, M\,$ with coefficients in the bundle $\, F\,$ with the flat connection $\, A\,$} to be the kernel of the restriction homomorphism $\,  r^*\,$,  
\[ H^*_{par}(M; F):= \text{Ker} \left(\, r^*:  H^*(M; F) \rightarrow  H^*(\partial M; F|_{\partial M})\,  \right)\,.\] 
(Compare  \cite{W1} and \cite{GHJW}, Section 3.) 

Of course, the parabolic cohomology  $\,H^*_{par}(M, F)\,$ is equal to the image of \break

We assume now that the manifold $\, M\,$ has dimension  $\, n=4k+2\,$. When $\, p=2k+1\,$, the Hodge star operator $\, *\,$ maps $\, \Omega ^p(F)\,$ onto itself, $\, *: \Omega ^p(F)\rightarrow \Omega ^p(F)\,$, and satisfies $\, **=-Id\,$. 
Moreover, it maps  $\, CcC^p\,$ onto itself, mapping $\, CcC^p_N\,$ onto $\, CcC^p_D\,$ and vice-versa. Thus $\, *\,$ gives a complex structure on $\,\Omega ^p(F)\,$ and on  $\, CcC^p\,$. For the rest of this Section we shall denote the Hodge star operator $\, *\,$ on $\,\Omega ^p(F)\,$ by $\, J\,$. We have 
\begin{equation}\label{hodge7} 
J( CcC^p)= CcC^p\,,\quad  J(CcC^p_N)=CcC^p_D\quad \text{and} \quad  J(CcC^p_D)=CcC^p_N\,. 
\end{equation}   

Since $\, M\,$ is compact, the cohomology groups $\, H^p(M, F)\,$ and $\, H^p(M,\partial M ; F)\,$ and, hence, $\, CcC^p_N\,$ and $\, CcC^p_D\,$ are finite dimensional vector spaces. Let $\, P_N:CcC^p\rightarrow CcC^p_N\,$ and  $\, P_D:CcC^p\rightarrow CcC^p_D\,$ be the orthogonal projections of $\, CcC^p\,$ onto $\, CcC^p_N\,$ and  $\, CcC^p_D\,$ respectively. By (\ref{hodge3}) the kernel $\, \text{Ker}(P_N)\,$ is equal to $\, E^p\cap cC^p\,$, while the kernel  $\, \text{Ker}(P_D)\,$ is equal to $\, C^p\cap cE^p\,$. Since $\, J\,$ is an isometry of $\, CcC^p\,$, it follows from (\ref{hodge7}) that $\, P_N \circ J = J\circ P_D\,$.  Let $\, {\mathscr P}_N:CcC_D^p\rightarrow  CcC^p_N\,$ be the restriction of $\,P_N\,$ to  $\, CcC^p_D\,$ and let  $\, {\mathscr P}_D:CcC_N^p\rightarrow  CcC^p_D\,$ be the restriction of $\,P_D\,$ to  $\, CcC^p_N\,$. We have 
\begin{equation}\label{hodge8}
 {\mathscr P}_N \circ J = J\circ {\mathscr P}_D  \,.
\end{equation}

When  $\, H^p(M,\partial M ; F)\,$ is identified with $\, CcC^p_D\,$ and  
 $\, H^p(M, F)\,$ with  $\, CcC^p_N\,$, the homomorphism $\,i^*: H^*(M,\partial M ; F)\rightarrow  H^*(M, F)$ is identified with  $\, {\mathscr P}_N:CcC_D^p\rightarrow  CcC^p_N\,$. The parabolic cohomology group $\, H^p_{par}(M, F)\,$ is thus identified with the image of  $\, {\mathscr P}_N:CcC_D^p\rightarrow  CcC^p_N\,$ which we denote by $\, U, \,\, U=\text{Im}({\mathscr P}_N) \subset CcC^p_N\,$. 

It follows then from (\ref{hodge8}) that $\, J(U)\,$ is equal to the image of  $\, {\mathscr P}_D:CcC_N^p\rightarrow  CcC^p_D\,$. We denote this image by $\, T\,, \,\, T=\text{Im}({\mathscr P}_D)= J(U) \subset CcC^p_D\,$. 

Let $\, T^{\perp}\,$ be the orthogonal complement of $\, T\,$ in $\, CcC^p_D\,$. 

\begin{lemma}\label{hodgeL1}
The kernel of  $\, {\mathscr P}_N:CcC_D^p\rightarrow  CcC^p_N\,$ is equal to  $\, T^{\perp}\,$.
\end{lemma}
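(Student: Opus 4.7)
The plan is to identify $\mathscr{P}_D$ as the adjoint of $\mathscr{P}_N$ with respect to the restricted $L^2$ inner products on the finite dimensional spaces $CcC^p_N$ and $CcC^p_D$, and then invoke the standard linear algebra identity $\text{Ker}(A)=(\text{Im}(A^*))^{\perp}$. Finite dimensionality is already recorded in the text immediately before the definition of $\mathscr{P}_N,\mathscr{P}_D$, so this last step is automatic once adjointness is established.

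First I would verify adjointness by a direct computation. Let $\alpha \in CcC^p_D$ and $\beta \in CcC^p_N$. Since $P_N: CcC^p \rightarrow CcC^p_N$ is an orthogonal projection, the vector $\alpha - P_N\alpha$ lies in the orthogonal complement of $CcC^p_N$ inside $CcC^p$ (which by (\ref{hodge3}) is $E^p\cap cC^p$), and in particular it is orthogonal to $\beta \in CcC^p_N$. Hence $(\mathscr{P}_N\alpha,\beta) = (P_N\alpha,\beta) = (\alpha,\beta)$. Applying the symmetric argument to $P_D$, using the second decomposition in (\ref{hodge3}), gives $(\alpha,\mathscr{P}_D\beta)=(\alpha,P_D\beta)=(\alpha,\beta)$. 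Comparing,
\[
(\mathscr{P}_N\alpha,\beta) = (\alpha,\mathscr{P}_D\beta) \qquad \text{for all } \alpha \in CcC^p_D, \,\, \beta \in CcC^p_N,
\]
which is precisely the statement $\mathscr{P}_D = (\mathscr{P}_N)^*$.

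With the adjointness in hand, the conclusion is immediate:
\[
\text{Ker}(\mathscr{P}_N) \,=\, \bigl( \text{Im}((\mathscr{P}_N)^*) \bigr)^{\perp} \,=\, \bigl(\text{Im}(\mathscr{P}_D)\bigr)^{\perp} \,=\, T^{\perp},
\]
the orthogonal complement being taken inside $CcC^p_D$.

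I do not foresee any serious obstacle. The only point requiring a bit of care is keeping track of which ambient inner product is in play at each step; once one notes that the inclusions $CcC^p_N, CcC^p_D \hookrightarrow CcC^p$ are isometric with respect to the $L^2$ inner product on $\Omega^p(F)$, the adjointness calculation is a one-line consequence of the defining property of an orthogonal projection. As a sanity check, one can also describe both sides explicitly: $\text{Ker}(\mathscr{P}_N) = CcC^p_D \cap \text{Ker}(P_N) = CcC^p_D \cap (E^p \cap cC^p) = CcC^p_D \cap E^p$, and the same expression arises for $T^{\perp}$ from the computation above, confirming the identification.
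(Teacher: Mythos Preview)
Your proof is correct and rests on the same computation as the paper's: both use that $(\mathscr{P}_N\alpha,\beta)=(\alpha,\beta)=(\alpha,\mathscr{P}_D\beta)$ for $\alpha\in CcC^p_D$, $\beta\in CcC^p_N$. The only cosmetic difference is that the paper uses this to prove the inclusion $T^{\perp}\subset\text{Ker}(\mathscr{P}_N)$ and then finishes by a dimension count (invoking $\dim T=\dim J(U)=\dim U$), whereas you package it as the adjoint relation $\mathscr{P}_D=(\mathscr{P}_N)^*$ and apply $\text{Ker}(A)=(\text{Im}(A^*))^{\perp}$.
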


\begin{proof}
Let $\, w\in T^{\perp}\subset CcC^p_D\,$. Let $\, x\in CcC^p_N\,$. Since $\,P_D\,$ is a symmetric mapping and since $\,{\mathscr P}_D(x)\in T\,$,  we get   $\, (w, x)=(P_D(w), x)=(w, P_D(x))=(w,{\mathscr P}_D(x))= 0\,$. Hence $\, w\,$ is orthogonal to  $\, CcC^p_N\,$ and therefore $\, {\mathscr P}_N(w)=0\,$. Thus $\, T^{\perp}\subset \text{Ker}( {\mathscr P}_N)\,$.  On the other hand $\, \dim  T^{\perp} = \dim CcC^p_D - \dim T = \dim CcC^p_D - \dim U =\dim CcC^p_D -\dim \text{Im}( {\mathscr P}_N)  = \dim \text{Ker}( {\mathscr P}_N)\,$. Thus  $\, T^{\perp} = \text{Ker}( {\mathscr P}_N)\,$.
\end{proof}

\begin{lemma}\label{hodgeL2}
Let $\, v\in T=J(U) \,$. If $\, v\,$ is orthogonal to $\, U\,$ then $\, v=0\,$.
\end{lemma}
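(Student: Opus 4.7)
The plan is to combine two ingredients: the skew-symmetry of $\, J=*\,$ with respect to the $\, L^2\,$ inner product on middle-dimensional forms, and Lemma \ref{hodgeL1}, which identifies $\, \text{Ker}(\mathscr P_N)\,$ with the orthogonal complement $\, T^{\perp}\,$ of $\, T\,$ inside $\, CcC^p_D\,$.

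First I would use $\, T=J(U)\,$ to write $\, v=J(u')\,$ for some $\, u'\in U\,$. Because $\, **=-Id\,$ on $\, \Omega^{2k+1}(F)\,$ when $\, n=4k+2\,$ and because $\, J\,$ is a pointwise (hence $\, L^2\,$) isometry, $\, J\,$ is skew-symmetric for $\, (\,\,,\,\,)\,$. Applied to the hypothesis $\, v\perp U\,$ this gives $\, 0=(v,u)=(Ju',u)=-(u',Ju)\,$ for every $\, u\in U\,$, so that $\, u'\,$ is orthogonal to $\, J(U)=T\,$.

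Next I would use Lemma \ref{hodgeL1} to produce a convenient preimage of $\, u'\,$. Since $\, CcC^p_D=T\oplus T^{\perp}\,$ and $\, T^{\perp}=\text{Ker}(\mathscr P_N)\,$, the restriction $\, \mathscr P_N|_T:T\to U\,$ is a linear isomorphism, so there is a unique $\, w\in T\,$ with $\, \mathscr P_N(w)=u'\,$. I would then evaluate $\, (u',w)\,$ in two ways. On one hand, since $\, u'=P_N(w)\,$ and $\, P_N\,$ is a self-adjoint orthogonal projection onto $\, CcC^p_N\,$, the standard identity for such projections gives $\, (u',w)=(P_N(w),w)=\|P_N(w)\|^2=\|u'\|^2\,$. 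On the other hand, $\, u'\perp T\,$ and $\, w\in T\,$ force $\, (u',w)=0\,$. Combining these equalities yields $\, u'=0\,$ and therefore $\, v=J(u')=0\,$.

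The only genuinely delicate point in this plan is producing $\, w\in T\,$ rather than an arbitrary preimage in $\, CcC^p_D\,$, for otherwise the computation $\, (u',w)=\|u'\|^2\,$ would not pair with the orthogonality $\, u'\perp T\,$. This is exactly what Lemma \ref{hodgeL1} is designed to supply; the remainder is a formal interplay between the skew-symmetry of $\, J\,$ and the self-adjointness of the orthogonal projections $\, P_N\,$ and $\, P_D\,$.
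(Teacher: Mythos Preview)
Your argument is correct, but it takes a genuinely different route from the paper. The paper never passes through $u'=J^{-1}v$ or the skew-symmetry of $J$; instead it works directly with $v$: since $v\perp U$ and the projection $\mathscr P_N$ differs from the identity by something orthogonal to $CcC^p_N\supset U$, the image $\mathscr P_N(v)$ is itself orthogonal to $U$, while it also lies in $U=\operatorname{Im}(\mathscr P_N)$; hence $\mathscr P_N(v)=0$, so by Lemma~\ref{hodgeL1} $v\in T^{\perp}$, and $v\in T\cap T^{\perp}=0$. Your proof replaces this two-line projection argument with the chain $v=Ju'\Rightarrow u'\perp T\Rightarrow (u',w)=\|u'\|^{2}=0$ via a preimage $w\in T$. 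Both proofs lean on Lemma~\ref{hodgeL1}, but the paper's version is shorter and does not need the extra ingredients (skew-symmetry of $J$, the isomorphism $\mathscr P_N|_T:T\to U$, the idempotent identity $(P_N w,w)=\|P_N w\|^2$). What your approach buys is a more symmetric picture: you effectively show that the dual statement ``$u'\in U$ orthogonal to $T$ forces $u'=0$'' holds, which is pleasant but not needed here.
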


\begin{proof} Assume that   $\, v\in T=J(U) \,$ is orthogonal to $\, U\,$.
Since $\, v\in CcC^p_D\,$, we have $\,  {\mathscr P}_N(v)\in U=\text{Im}({\mathscr P}_N)\,$. On the other hand, since  $\,  {\mathscr P}_N\,$ is a projection along a space orthogonal to $\, CcC^p_N\,$ and, hence, orthogonal to $\, U\,$, we get that $\,  {\mathscr P}_N(v)\,$ is also orthogonal to $\, U\,$. Since  $\,  {\mathscr P}_N(v)\,$  both belongs to  $\, U\,$ and is othogonal to  $\, U\,$, we must have  $\,  {\mathscr P}_N(v)=0\,$. Thus $\, v\,$ belongs to $\,  \text{Ker}( {\mathscr P}_N)\,$ which, by Lemma \ref{hodgeL1}, is equal to $\, T^{\perp}\,$.  Belonging to $\, T\,$ and $\, T^{\perp}\,$ at the same time, $\,v\,$ must be $\, 0\,$.  
\end{proof} 

Let $\, V\,$ be the subspace of $\, CcC^p\,$ spanned by $\, CcC^p_D\,$ and  $\, CcC^p_N\,$. Since both these spaces are finite dimensional, so is $\, V$.
Moreover, (\ref{hodge7}) implies that $\, V\,$ is a complex subspace of  $\, CcC^p\,$ with respect to the complex structure $\, J$ given by the Hodge star operator. V inherits the real inner product $\, (\,\, , \,\,)\,$ from $\, CcC^p\,$ and $\, J\,$ acts as an isometry. Finally, $\, U\subset V\,$ and, according to Lemma \ref{hodgeL2},
\begin{equation}\label{hodge9}
J(U)\cap U^{\perp} = 0 \, ,
\end{equation}
where this time $\, U^{\perp}\,$ denotes the orthogonal complement of $\, U\,$ in $\, V$. 

The alternating $2$-form $\, \omega (u,v) = (J(u), v)\,$ is a symplectic (non-degenerate) form on $\, V\,$. The property  (\ref{hodge9}) implies that the restriction of $\, \omega\,$ to $\, U\,$ is a symplectic (non-degenerate) form on $\, U\,$.  

Since (\ref{hodge9}) is satisfied, we can now apply the construction of  Section \ref{linprob} to $\,V, \,\,  U\,$ and $\,J\,$ and obtain a linear operator  
\[ J_U:U \rightarrow U\, \]
which equips the space $\, U\,$ with a complex structure. 
When $\, U\,$ is identified with the parabolic cohomology $\, H^p_{par}(M; F)\,$ we denote the operator corresponding to $\, J_U\,$  by $\, J_{par}\,$ ,
\begin{equation}\label{hodge10} 
 J_{par}:  H^p_{par}(M; F) \rightarrow  H^p_{par}(M; F)
\end{equation}  
and call it {\it the modified Hodge star operator on the parabolic cohomology}.  
We have the real inner product  $\, (\,\, , \,\,)\,$ and the symplectic form $\, \omega \,$ on  $\, H^p_{par}(M; F)=U\,$. 
Proposition \ref{linprobP1} gives now

\begin{theorem}\label{hodgeT2} Let $\,M\,$ be a smooth compact oriented Riemannian manifold of dimension $\, n=4k+2\,$, with or without boundary, and $\,F\,$ be a real finite dimensional vector bundle over $\,M\,$ equiped with an inner product and a flat connection. Let $\, p=2k+1\,$. Then the modified Hodge star operator  
$\, J_{par}:  H^p_{par}(M; F) \rightarrow  H^p_{par}(M; F) \,$
satisfies 

{\rm (i)} $\, ( J_{par})^2=-Id,\,$ 

{\rm (ii)} $\, \omega (J_{par}(u), \, J_{par}(v)) =  \omega (u, \, v) \,\, \text{for} \,\, u,v\in H^p_{par}(M; F) \,$, and 

{\rm (iii)} $\, \omega(u, \, J_{par}(u))>0\,$ for all $\, u\in H^p_{par}(M; F), \, \, u\ne 0\,$,

\noindent
that is,  $\, J_{par}\,$ is a complex structure  on the parabolic cohomology  $\, H^p_{par}(M; F)\,$  compatible with the symplectic form $\, \omega\,$.
\end{theorem}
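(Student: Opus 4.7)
The entire conceptual setup has already been done in the paragraphs immediately preceding the theorem, so my plan is to package everything as a direct application of Proposition \ref{linprobP1} to the triple $(V, U, J)$ constructed there.

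First, I would verify that $(V, J, (\,\,,\,\,))$ fits the framework of Section \ref{linprob}. The subspace $V \subset CcC^p$ spanned by $CcC^p_N$ and $CcC^p_D$ is finite dimensional since $CcC^p_N \cong H^p(M;F)$ and $CcC^p_D \cong H^p(M,\partial M; F)$ are finite dimensional. For $n=4k+2$ and $p=2k+1$, the identity $** = (-1)^{p(n-p)}\operatorname{Id} = -\operatorname{Id}$ shows that $J = *$ is a complex structure on $\Omega^p(F)$, and by (\ref{hodge7}) it preserves $V$. The $L^2$ inner product restricts to a positive definite real inner product on $V$, and $J$ is an isometry because $*$ preserves the pointwise inner product on $\Lambda^p T^*M \otimes F$. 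Thus $V$ is a finite dimensional complex vector space with a real inner product whose multiplication by $i$ (given by $J$) is an isometry, exactly the situation of Section \ref{linprob}.

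Next I would identify the subspace $U$. Under the Hodge identifications, $H^p_{par}(M;F)$ corresponds to $U = \operatorname{Im}(\mathscr{P}_N) \subset CcC^p_N \subset V$. The crucial condition (\ref{linprob1}), namely $J(U) \cap U^\perp = \{0\}$ inside $V$, is precisely equation (\ref{hodge9}), which was established in Lemma \ref{hodgeL2}. So all hypotheses of Proposition \ref{linprobP1} are satisfied for the triple $(V, U, J)$.

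Now I would apply Proposition \ref{linprobP1} directly. It produces an operator $J_U : U \to U$ satisfying $J_U^2 = -\operatorname{Id}$, isometry, $\omega(J_U u, J_U v) = \omega(u,v)$, and $\omega(u, J_U u) > 0$ for $u \neq 0$, where $\omega(u,v) = (J u, v)$. Transporting $J_U$ along the identification $U \cong H^p_{par}(M;F)$ gives the operator denoted $J_{par}$ in (\ref{hodge10}). Conclusions (i), (ii), (iii) of Theorem \ref{hodgeT2} are then immediate restatements of parts (i), (iii), (iv) of Proposition \ref{linprobP1}.

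The only step that required genuine work was the verification of (\ref{hodge9}), and that was done in Lemma \ref{hodgeL2} using Lemma \ref{hodgeL1} and the fact that $J$ interchanges $CcC^p_N$ and $CcC^p_D$. Beyond those ingredients, the theorem is essentially a corollary of the linear algebra of Section \ref{linprob}; the proof I would write is just a brief citation to Proposition \ref{linprobP1} with the identifications made above.
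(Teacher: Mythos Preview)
Your proposal is correct and is exactly the paper's approach: the theorem is stated immediately after the sentence ``Proposition \ref{linprobP1} gives now,'' so the paper treats it as a direct corollary of Proposition \ref{linprobP1} applied to the triple $(V,U,J)$ set up in the preceding paragraphs, with (\ref{hodge9}) supplied by Lemma \ref{hodgeL2}. Your identification of parts (i), (iii), (iv) of Proposition \ref{linprobP1} with parts (i), (ii), (iii) of the theorem is also right.
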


\begin{remark} {\rm 

(i) The symplectic form $\, \omega\,$ on $\, H^p_{par}(M; F)=U\,$ is the restriction of the form  $\, \omega\,$ on  $\, H^p(M; F)=CcC^p_N\,$ which in turn is given by 
\[\begin{split}\omega (u,v)&=(Ju,v)=(*u,v)= (v,*u)=\int_M B(v\wedge **u)=\int_M B(u\wedge v) =\\&=([u]\cup [v])[M; \partial M] \,,\end{split}  \] 
where $[u]$ and $[v]$ denote the cohomology classes of the closed forms $u$ and $v$. 
Thus the symplectic form  $\, \omega\,$ is given by the cup (wedge) product composed with $B$. 

(ii) When $\,M\,$ is without boundary, $\, \partial M=\emptyset\,$, then $\, CcC^p_N= CcC^p_D=U=J(U)\,$ above and $\, J_{par}=J=*\,$. Thus, in that case, the parabolic cohomology  $\, H^p_{par}(M; F)\,$ is equal to the ordinary cohomology $\,H^p(M; F)\,$ and the modified Hodge operator is equal to the ordinary Hodge star operator. 

(iii) If $\, M\,$ is not connected then it is obvious from the construction above that the parabolic cohomology  $\, H^p_{par}(M; F)\,$ and the modified Hodge operator $\,J_{par}\,$ are direct sums of their counter-parts on the components. 

(iv) The modified Hodge star operator  $\,J_{par}\,$ is canonically determined by the choice of the Riemannian metric and the orientation on $\, M\,$ and the choice of the inner product and the flat connection on $\, F\,$.  }
\end{remark}

\section{The moduli space of flat connections \\ on a Riemann surface with boundary}\label{moduli} 

\bigskip 

Let $\, G\,$ be a compact Lie group with a Lie algebra $\, {\mathfrak g}\,$ equiped with a real-valued positive definite invariant inner product. Let $\, S\,$ be a smooth compact oriented surface, with or without boundary. We consider the moduli space  $\, {\mathscr M}= {\mathscr M}(S; G, C_1, ... , C_k) \,$  of gauge equivalence classes of flat connections in the trivial pricipal $\, G$-bundle over $\, S\,$ with monodromies along boundary components belonging to some fixed conjugacy classes $\,  C_1, ... , C_k\,$    in $\, G\,$, $\,k\,$ being the number of boundary components of $\, S\,$ (see \cite{GHJW}).

The space  $\, {\mathscr M}\,$  is a finite dimensional manifold with singularities. We denote by $\, \Sigma \subset {\mathscr M}  \,$ the singular locus. Every point of  $\, {\mathscr M}\,$ can be represented by a group homomorphism $\, \phi : \pi _1 (S) \rightarrow G\,$ such that $\, \phi\,$ maps elements of $\,  \pi _1 (S)  \,$ given by the boundary components into the corresponding conjugacy classes $\, C_j\,$. Let $\, G\,$ act on $\, {\mathfrak g}\,$ through the adjoint representation. To every such group homomorphism $\,  \phi\,$ we can associate a bundle over $\, S\,$ with fiber $\, {\mathfrak g}\,$ equiped with a flat connection and an $\, \R$-valued positive definite inner product in the fibers. We denote that flat vector bundle by $\,  {\mathfrak g}_{\phi}\,$. The tangent space to  $\, {\mathscr M}\,$ at a non-singular point $\, [\phi ]\in  {\mathscr M}  \,$ is naturally identified with the parabolic cohomology group $\, H^1_{par}(S; {\mathfrak g}_{\phi})\,$ (see \cite{GHJW}, Section 3, Propositions 4.4 and 4.5 and pp.409-410 thereof). 

In \cite{GHJW} the manifold  $\, {\mathscr M}-\Sigma\,$ is equiped with a symplectic structure given by  $\, -1\,$ times the  wedge product of forms and the inner product on the bundle  $\,  {\mathfrak g}_{\phi}\,$, (\cite {GHJW}, Section 3, pp.386-387 and Theorem 10.5).
Hence, this symplectic structure is the negative of the one given by the form $\, \omega\,$ in our paper.

It follows now from Theorem \ref{hodgeT2} that a choice of a Riemannian metric on the surface $\, S\,$ gives, via the modified Hodge star operator $\, J_{par}\,$,  a canonical 
almost complex structure on the moduli space $\, {\mathscr M}-\Sigma\,$   compatible with the symplectic form $\, \omega \,$.   To get an almost complex structure on  $\, {\mathscr M}-\Sigma\,$ compatible with the symplectic form of \cite{GHJW} one has to take the operator $\,-J_{par}\,$. 

\vskip1truecm

\vskip1truecm 

\begin{thebibliography}{99}


\bibitem{CDGM} S.Cappell, D.DeTurck, H.Gluck, E.Y.Miller, {\it Cohomology of harmonic forms on Riemannian manifolds with boundary  }, Forum Math. {\bf 18} (2006), 923-931.
\bibitem{GHJW} K.Guruprasad, J.Huebschmann, L.Jeffrey, A.Weinstein, {\em Group 
systems, groupoids, and moduli spaces of parabolic bundles}, Duke Math.Journal 
 {\bf 89} (1997), 377-412.
\bibitem{M1} C.B.Morrey, {\it A variational method in the theory of harmonic integrals, II,  } Amer.J.Math. {\bf 78} (1956), 137-170.  
\bibitem{W1} A.Weil, {\it Remarks on the cohomology of groups}, Ann. of Math. (2) {\bf 80} (1964), 149-157.


\end{thebibliography}
\end{document}